\newtheorem{thm}{Theorem}[section]
\theoremstyle{definition}
\theoremstyle{remark}
\numberwithin{equation}{section}
\newcommand{\CC}{\mathbb{C}}
\newcommand{\ZZ}{\mathbb{Z}}
\newcommand{\E}{\mathbb{E}}
\newcommand{\PPP}{\mathbb{P}}
\newcommand{\X}{\mathbf{X}}
\newcommand{\Y}{\mathbf{Y}}
\newcommand{\Z}{\mathbf{Z}}
\newcommand{\0}{\mathbf{0}}
\newcommand{\uu}{\mathbf{u}}
\newcommand{\UU}{\bm{U}}
\newcommand{\VV}{\bm{V}}
\newcommand{\DD}{\bm{D}}
\newcommand{\A}{\bm{A}}
\newcommand{\C}{\bm{C}}
\newcommand{\I}{\bm{I}}
\newcommand{\OO}{\bm{O}}
\newcommand{\f}{\bm{f}}
\newcommand{\etav}{\bm{\eta}}
\newcommand{\xiv}{\bm{\xi}}
\newcommand{\Phiv}{\bm{\Phi}}
\newcommand{\Siga}{\bm{\Sigma}}
\newcommand{\Lam}{\bm{\Lambda}}
\newcommand{\av}{\bm{a}}
\newcommand{\bv}{\bm{b}}
\newcommand{\phiv}{\bm{\phi}}
\newcommand{\muv}{\bm{\mu}}
\newcommand{\F}{\bm{F}}
\newcommand{\Lada}{\bm{\Lambda}}
\newcommand{\SSS}{\bm{S}}
\newcommand{\psiv}{\bm{\psi}}
\newcommand{\gammav}{\bm{\gamma}}
\newcommand{\Gammav}{\bm{\Gamma}}
\newcommand{\proj}{\mathrm{Proj}}
\newcommand{\diag}{\mathrm{diag}}
\newcommand{\cov}{\mathrm{Cov}}
\newcommand{\tr}{\, \mathrm{tr}}
\newcommand{\spanot}{\overline{\mathrm{span}}}
\begin{document}

\title[Factorization of a spectral density with smooth eigenvalues]{Factorization of a spectral density with smooth eigenvalues of a multidimensional stationary time series}

\author{Tam\'as Szabados}

\address{Budapest University of Technology and Economics}

\email{szabados@math.bme.hu}

\keywords{multidimensional stationary time series, smooth spectral density, spectral factor, best linear prediction; MSC2020: 62M10, 60G10, 60G12}

\begin{abstract}
The aim of this paper to give a multidimensional version of the classical one-dimensional case of smooth spectral density. A smooth spectral density gives an explicit method to factorize the spectral density and compute the constituents of the Wold representation of a regular weakly stationary time series. These constituents are important to give the best linear predictions of the time series.
\end{abstract}

\maketitle

\section{Introduction}\label{se:intro}

Let $\X_t = (X_t^1, \dots , X_t^d)$, $t \in \ZZ$, be a $d$-dimensional weakly stationary time series, where each $X_t^j$ is a complex valued random variable on the same probability space $(\Omega, \mathcal{F}, \PPP)$. It is a second order, and in this sense, translation invariant process:
\[
\E \X_t = \muv \in \CC^d, \qquad \E((\X_{t+h}-\muv)(\X^*_t - \muv^*)) = \C(h) \in \CC^{d \times d}, \qquad \forall t,h \in \ZZ,
\]
where $\C(h)$, $h \in \ZZ$, is the non-negative definite covariance matrix function of the process. Without loss of generality, from now on it is assumed that $\muv = \0$.

Thus the considered random variables will be $d$-dimensional, square integrable, zero expectation random complex vectors whose components belong to the Hilbert space $L^2(\Omega, \mathcal{F}, \PPP)$. However, the \emph{orthogonality }of the random vectors $\X$ and $\Y$ is defined by the stronger relationship $\X \perp \Y \Leftrightarrow \cov(\X, \Y) = \E(\X \Y^*) = \OO$.

The \emph{past of $\{\X_t\}$ until time $n \in \ZZ$} is the closed linear span in $L^2(\Omega, \mathcal{F}, \PPP)$ of the past and present values of the components of the process:
\[
H^-_n := \spanot \{X^{\ell}_{\gamma} : \ell = 1, \dots , d; t \le n\}.
\]
The \emph{remote past} of $\{\X_t\}$ is $H_{-\infty} := \bigcap_{n \in \ZZ} H^-_n$. The process $\{\X_t\}$ is called \emph{regular}, if $H_{-\infty} = \{ 0 \}$ and it is called \emph{singular} if $H_{-\infty} = H(\X) := \spanot \{\X_t : t \in \ZZ\}$. Of course, there is a range of cases between these two extremes.

Singular processes are also called \emph{deterministic}, see e.g.\ Brockwell, Davis, and Fienberg (1991), because based on the past $H^-_0$, future values $\X_{1}$, $\X_{2}$, \dots, can be predicted with zero mean square error. On the other hand, regular processes are also called \emph{purely non-deterministic}, since their behavior are completely influenced by random \emph{innovations}. Consequently, knowing $H^-_0$, future values $\X_{1}$, $\X_{2}$, \dots, can be predicted only with positive mean square errors $\sigma^2_1$, $\sigma^2_2$, \dots, and $\lim_{t \to \infty} \sigma^2_t = \| \X_0 \|^2$. This shows why studying \emph{regular} time series is a primary target both in the theory and  applications. The Wold decomposition proves that any non-singular process can be decomposed into an orthogonal sum of a nonzero regular and a singular process. This also supports why it is important to study regular processes.

The Wold decomposition implies, see e.g.\ the classical references Rozanov (1967) and Wiener and Masani (1957), that $\{\X_t\}$ is regular if and only if $\{\X_t\}$ can be written as a causal infinite moving average (\emph{a Wold representation})
\begin{equation}\label{eq:causal_MA}
\X_t = \sum_{j=0}^{\infty} \bv(j) \xiv_{t-j}, \qquad t \in \ZZ, \qquad \bv(j) \in \CC^{d \times r} ,
\end{equation}
where $\{\xiv_t\}_{t \in \ZZ}$ is an $r$-dimensional $(r \le d)$ orthonormal white noise sequence: $\E \xiv_t = \0$, $\E(\xiv_s \xiv^*_t) = \delta_{s t} \I_r$, $\I_r$ is the $r \times r$ identity matrix. The white noise process $\{\xiv_t\}$ in the above Wold representation is unique up to a multiplication by a constant unitary matrix; so it is called \emph{a fundamental process} of the regular time series.

An important use of Wold representation is that \emph{the best linear $h$-step ahead prediction} $\hat{\X}_h$ can be given in terms of that. If the present time is $0$, then the orthogonal projection of $\X_h$ to the past $H^-_0(\X)$ is
\begin{equation}\label{eq:pred_h_step}
\hat{\X}_h = \sum_{j=h}^{\infty} \bv(j) \xiv_{h-j} = \sum_{k=-\infty}^0 \bv(h-k) \xiv_k .
\end{equation}

An alternative way to write Wold representation is
\begin{equation}\label{eq:Wold_innov}
\X_t = \sum_{j=0}^{\infty} \av(j) \etav_{t-j}, \qquad t \in \ZZ, \qquad \av(j) \in \CC^{d \times d} ,
\end{equation}
where $\{\etav_t\}_{t \in \ZZ}$ is the $d$-dimensional white noise process of \emph{innovations}:
\begin{equation}\label{eq:innovation}
\etav_t := \X_t - \proj_{H^-_{t-1}} \X_t,  \quad t \in \mathbb{Z} ,
\end{equation}
$\E \etav_t = \0$, $\E(\etav_s \etav^*_t) = \delta_{s t} \Siga$, $\Siga$ is a $d \times d$ non-negative definite matrix of rank $r$.

It is also classical that any weakly stationary process has a non-negative definite spectral measure matrix $d\F$ on $[-\pi, \pi]$ such that
\[
\C(h) = \int_{-\pi}^{\pi} e^{i h \omega} d\F(\omega), \qquad h \in \ZZ.
\]
Then $\{\X_t\}$ is regular, see again e.g.\ Rozanov (1967) and Wiener and Masani (1957), if and only if $d\F  = \f$, the spectral density $\f$ has a.e.\ constant rank $r$, and can be factored in a form
\begin{equation}\label{eq:f_factor}
\f(\omega) = \frac{1}{2 \pi} \phiv(\omega) \phiv^*(\omega) , \quad \phiv(\omega) = [\phi_{k \ell}(\omega)]_{d \times r} , \quad \text{for a.e.\ } \omega \in [-\pi, \pi] ,
\end{equation}
where
\begin{equation}\label{eq:bj_Fourier}
\phiv(\omega) = \sum_{j=0}^{\infty} \tilde{\bv}(j) e^{-i j \omega}, \quad \|\phiv\|^2 = \sum_{j=0}^{\infty} \|\tilde{\bv}(j)\|^2 < \infty,
\end{equation}
$\| \cdot \|$ denotes spectral norm. Here the sequence of coefficients $\{\tilde{\bv}(j)\}$ is not necessarily the same as in the Wold decomposition. Also,
\begin{equation}\label{eq:Phiv}
\phiv(\omega) = \Phiv(e^{-i \omega}), \quad \omega \in (-\pi, \pi], \quad \Phiv(z) = \sum_{j=0}^{\infty} \tilde{\bv}(j) z^j,  \quad z \in D,
\end{equation}
so the entries of the \emph{analytic matrix function} $\Phiv(z) = [\Phi_{j k}(z)]_{d \times r}$ are  analytic functions in the open unit disc $D$ and belong to the class $L^2(T)$ on the unit circle $T$, consequently, they belong to the Hardy space $H^2$. It is written as $\Phiv \in H^2_{d \times r}$ or briefly $\Phiv \in H^2$.

Recall that the Hardy space $H^p$, $0 < p \le \infty$, denotes the space of all functions $f$ analytic in $D$ whose $L^p$ norms over all circles $\{z \in \CC : |z| = r\}$, $0 < r < 1$, are bounded, see e.g.\  Rudin (2006, Definition 17.7). If $p \ge 1$, then equivalently, $H^p$ is the Banach space of all functions $f \in L^p(T)$ such that
\[
f(e^{i \omega}) = \sum_{n=0}^{\infty} a_n e^{i n \omega} , \qquad \omega \in [-\pi, \pi],
\]
so the Fourier series of $f(e^{i \omega})$ is one-sided, $a_n = 0$ when $n < 0$, see e.g.\ Fuhrmann (2014, Section II.12). Notice that in formulas \eqref{eq:bj_Fourier} and \eqref{eq:Phiv} there is a negative sign in the exponent: this is a matter of convention that I am going to use in the sequel too.

 An especially important special case of Hardy spaces is $H^2$, which is a Hilbert space, and which by Fourier transform isometrically isomorphic with the $\ell^2$ space of sequences $\{a_0, a_1, a_2, \dots\}$ with norm square
\[
\frac{1}{2 \pi} \int_{-\pi}^{\pi} |f(e^{i \omega})|^2 d\omega = \sum_{n=0}^{\infty} |a_n|^2 .
\]

For a one-dimensional time series $\{X_t\}$ ($d=1$) there exists a rather simple sufficient and necessary condition of regularity given by Kolmogorov (1941):
\begin{itemize}
  \item[(1)] $\{X_t\}$ has an absolutely continuous spectral measure with density $f$,
  \item[(2)] $\log f \in L^1$, that is, $\int_{-\pi}^{\pi} \log f(\omega) d\omega > -\infty$.
\end{itemize}
Then the Kolmogorov--Szeg\H{o} formula also holds:
\[
\sigma^2 = 2\pi \exp \int_{-\pi}^{\pi} \log f(\omega) \frac{d \omega}{2\pi}  ,
\]
where $\sigma^2$ is the variance of the innovations $\eta_t := X_t - \proj_{H^-_{t-1}} X_t$, that is, the variance of the one-step-ahead prediction.

For a multidimensional time series $\{\X_t\}$ which has \emph{full rank}, that is, when $\f$ has a.e.\ \emph{full rank}: $r=d$, and so the innovations $\etav_t$ defined by \eqref{eq:innovation} have full rank $d$, there exists a similar simple sufficient and necessary condition of regularity, see Rozanov (1967) and Wiener and Masani (1957):
\begin{itemize}
  \item[(1)] $\{\X_t\}$ has an absolutely continuous spectral measure matrix $d\F$ with density matrix $\f$,
  \item[(2)] $\log \det \f \in L^1$, that is, $\int_{-\pi}^{\pi} \log \det \f(\omega) d\omega > -\infty$.
\end{itemize}
Then the $d$-dimensional Kolmogorov--Szeg\H{o} formula also holds:
\begin{equation}\label{eq:Kolm_Szego_mult}
\det \Siga = (2\pi)^d \exp \int_{-\pi}^{\pi} \log \det \bm{f}(\omega) \frac{d \omega}{2\pi}  ,
\end{equation}
where $\Siga$ is the covariance matrix of the innovations $\etav_t$ defined by \eqref{eq:innovation}.

On the other hand, the generic case of regular time series is more complicated. To my best knowledge, the result one has is Rozanov (1967, Theorem 8.1):
A $d$-dimensional stationary time series $\{\X_t\}$ is regular and of rank $r$, $1 \le r \le d$, if and only if each of the following conditions holds:
\begin{itemize}
  \item[(1)] It has an absolutely continuous spectral measure matrix $d\F$ with density matrix $\f(\omega)$ which has rank $r$ for a.e.\ $\omega \in[-\pi, \pi]$.
  \item[(2)] The density matrix $\f(\omega)$ has a principal minor $M(\omega) = \det [f_{i_p j_q}(\omega)]_{p,q=1}^r$, which is nonzero a.e.\ and
      \[
      \int_{-\pi}^{\pi} \log M(\omega) d\omega > - \infty.
      \]
   \item[(3)] Let $M_{k \ell}(\omega)$ denote the determinant obtained from $M(\omega)$ by replacing its $\ell$th row by the row $[f_{k i_p}(\omega)]_{p=1}^r$. Then the functions $\gamma_{k \ell}(e^{-i \omega}) = M_{k \ell}(\omega)/M(\omega)$ are boundary values of functions of the Nevanlinna class $N$.
\end{itemize}
It is immediately remarked in the cited reference that ``unfortunately, there is no general method determining, from the boundary value $\gamma(e^{-i \omega})$ of a function $\gamma(z)$, whether it belongs to the class $N$.''

Recall that the Nevanlinna class $N$ consists of all functions $f$ analytic in the open unit ball $D$ that can be written as a ratio $f = f_1/f_2$, $f_1 \in H^p$, $f_2 \in H^q$, $p, q > 0$, where $H^p$ and $H^q$ denote Hardy spaces, see e.g.\ Nikolski (1991, Definition 3.3.1).

The aim of this paper is to extend from the one-dimensional case to the multidimensional case
a well-known sufficient condition for the regularity and a method of finding an $H^2$ spectral factor in the case of smooth spectral density.

\section{Generic regular processes}\label{sse:generic_regular}

To find an $H^2$ spectral factor if possible, a simple idea is to use a spectral decomposition of the spectral density matrix. (Take care that here we use the word `spectral' in two different meanings. On one hand, we use the spectral density of a time series in terms of a Fourier spectrum, on the other hand we take the spectral decomposition of a matrix in terms of eigenvalues and eigenvectors.)

So let $\{\X_t\}$ be a $d$-dimensional stationary time series and assume that its spectral measure matrix $d\F$ is absolutely continuous with density matrix $\f(\omega)$ which has rank $r$, $1 \le r \le d$, for a.e.\ $\omega \in [-\pi, \pi]$. Take the parsimonious spectral decomposition of the self-adjoint, non-negative definite matrix $\f(\omega)$:
\begin{equation}\label{eq:spectr_decomp_f_short1}
\f(\omega) = \sum_{j=1}^{r} \lambda_j(\omega) \uu_j(\omega) \uu^*_j(\omega) = \tilde{\UU}(\omega) \Lada_r(\omega) \tilde{\UU}^*(\omega) ,
\end{equation}
where
\begin{equation}\label{eq:Lam_r}
\Lada_r(\omega) = \diag[\lambda_1(\omega), \dots , \lambda_r(\omega)], \quad \lambda_1(\omega) \ge \cdots \ge \lambda_r(\omega) > 0,
\end{equation}
for a.e.\ $\omega \in [-\pi, \pi]$, is the diagonal matrix of nonzero eigenvalues of $\f(\omega)$ and
\[
\tilde{\UU}(\omega) = [\uu_1(\omega), \dots , \uu_r(\omega)] \in \CC^{d \times r}
\]
is a sub-unitary matrix of corresponding right eigenvectors, not unique even if all eigenvalues are distinct. Then, still, we have
\begin{equation}\label{eq:spectr_decomp_f_short}
\Lam_r(\omega) = \tilde{\UU}^*(\omega) \f(\omega) \tilde{\UU}(\omega) .
\end{equation}

The matrix function $\Lam_r(\omega)$ is a self-adjoint, positive definite function, and 
\[
\tr(\Lam_r(\omega)) = \tr(\f(\omega)), 
\]
where $\f(\omega)$ is the density function of a finite spectral measure. This shows that the integral of $\tr(\Lam_r(\omega))$ over $[-\pi, \pi]$ is finite. Thus $\Lam_r(\omega)$ can be considered the spectral density function of a full rank regular process. So it can be factored, in fact, we may take a miniphase $H^2$ spectral factor $\DD_r(\omega)$ of it:
\begin{equation}\label{eq:Lamr_factor1}
\Lam_r(\omega) = \frac{1}{2 \pi} \DD_r(\omega) \DD_r(\omega),
\end{equation}
where $\DD_r(\omega)$ is a diagonal matrix.

Then a simple way to factorize $\f$ is to choose
\begin{equation}\label{eq:trivial_spectral_factor}
\phiv(\omega) = \tilde{\UU}(\omega) \DD_r(\omega) \A(\omega) = \tilde{\UU}(\omega) \A(\omega) \DD_r(\omega) = \tilde{\UU}_{\A}(\omega) \DD_r(\omega),
\end{equation}
where $\A(\omega) = \diag[a_1(\omega), \dots , a_r(\omega)]$, each $a_k(\omega)$ being a measurable function on $[-\pi, \pi]$ such that $|a_k(\omega)| = 1$ for any $\omega$, but otherwise arbitrary and $\tilde{\UU}_{\A}(\omega)$ still denotes a sub-unitary matrix of eigenvectors of $\f$ in the same order as the one of the eigenvalues.

To my best knowledge it is not known if for any regular time series $\{\X_t\}$ there exists such a matrix valued function $\A(\omega)$ so that $\phiv(\omega)$ defined by \eqref{eq:trivial_spectral_factor} has a Fourier series with only non-negative powers of $e^{-i \omega}$. Equivalently, does there exist an $H^2$ analytic matrix function $\Phiv(z)$  whose boundary value is the above spectral factor $\phiv(\omega)$ with some $\A(\omega)$, according to the formulas \eqref{eq:f_factor}--\eqref{eq:Phiv}?

\begin{thm}\label{th:generic_reg_nec} See \cite[Theorem 2.1]{szabados2022regular} and \cite[Theorem 2.1]{szabados2022corrigenda}.

\begin{itemize}
\item[(a)] Assume that a $d$-dimensional stationary time series $\{\X_t\}$ is regular of rank $r$, $1 \le r \le d$. Then for $\Lam_r(\omega)$ defined by \eqref{eq:Lam_r} one has $\log \det \Lam_r \in L^1 = L^1([-\pi, \pi], \mathcal{B}, d\omega)$, equivalently,
\begin{equation}\label{eq:log_lambda}
    \int_{-\pi}^{\pi} \log \lambda_r(\omega) \, d\omega > -\infty .
\end{equation}

\item[(b)] If moreover one assumes that the regular time series $\{\X_t\}$ is such that has an $H^2$ spectral factor of the form \eqref{eq:trivial_spectral_factor}, then the following statement holds as well:

The sub-unitary matrix function $\tilde{\UU}(\omega)$ appearing in the spectral decomposition of $\f(\omega)$ in \eqref{eq:spectr_decomp_f_short1} can be chosen so that it belongs to the Hardy space $H^{\infty} \subset H^2$, thus

\begin{equation}\label{eq:UU_Fourier_series}
  \tilde{\UU}(\omega) = \sum_{j=0}^{\infty} \psiv(j) e^{-i j \omega} , \quad \psiv(j) \in \CC^{d \times r}, \quad \sum_{j=0}^{\infty} \|\psiv(j)\|^2 < \infty .
\end{equation}
In this case one may call $\tilde{\UU}(\omega)$ an inner matrix function.
\end{itemize}
\end{thm}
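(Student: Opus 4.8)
The plan is to prove the two parts separately, in each case reducing the claim to a standard property of Hardy and Nevanlinna classes. For part (a) the governing idea is that although the eigenvalues $\lambda_j(\omega)$ are only measurable, their product is, up to a constant, the boundary value of a genuinely analytic object. Since $\{\X_t\}$ is regular of rank $r$ it has, by \eqref{eq:f_factor}--\eqref{eq:Phiv}, a factorization $\f=\frac{1}{2\pi}\phiv\phiv^*$ with $\phiv\in H^2_{d\times r}$, and the nonzero eigenvalues of $2\pi\f=\phiv\phiv^*$ are exactly the squared singular values of $\phiv$. Hence
\[ \det\Lam_r(\omega)=\prod_{j=1}^{r}\lambda_j(\omega)=(2\pi)^{-r}\det\!\big(\phiv^*(\omega)\phiv(\omega)\big). \]
First I would apply the Cauchy--Binet formula, $\det(\phiv^*\phiv)=\sum_{S}|\det\phiv_S|^2$, the sum running over all $r$-element row index sets $S$ and $\phiv_S$ denoting the corresponding $r\times r$ submatrix. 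Each $\det\phiv_S(\omega)=\det\Phiv_S(e^{-i\omega})$ is the boundary value of a determinant of $r$ entries of $\Phiv\in H^2$, hence lies in the Hardy space $H^{2/r}$ and in particular in the Nevanlinna class $N$. Because $\f$ has rank $r$ a.e., $\det(\phiv^*\phiv)>0$ a.e., so at least one minor $\det\Phiv_{S_0}$ is not identically zero.

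The analytic input is then the classical fact that a nonzero $g\in N$ has log-integrable boundary modulus: $\int_{-\pi}^{\pi}\log|g(e^{i\omega})|\,d\omega>-\infty$. Applying this to $g=\det\Phiv_{S_0}$ together with the pointwise bound $\det\Lam_r\ge(2\pi)^{-r}|\det\phiv_{S_0}|^2$ yields $\int\log\det\Lam_r>-\infty$. The matching upper bound is routine, since $\log^+\det\Lam_r\le\sum_j\log^+\lambda_j\le\tr\Lam_r=\tr\f\in L^1$. Combining the two gives $\log\det\Lam_r\in L^1$, and because $\lambda_1\ge\cdots\ge\lambda_r$ this is equivalent to \eqref{eq:log_lambda}.

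For part (b) the plan is to identify the correct function-class membership of the entries of $\tilde{\UU}_{\A}=\phiv\DD_r^{-1}$ and then invoke Smirnov's maximum principle. By part (a) each $\lambda_j$ satisfies $\log\lambda_j\in L^1$, so the diagonal miniphase factor $\DD_r=\diag[d_1,\dots,d_r]$ of \eqref{eq:Lamr_factor1} has \emph{outer} entries $d_j$ with $|d_j|^2=2\pi\lambda_j$; consequently $1/d_j$ lies in the Smirnov class $N^+$. Writing the $j$th column as $\tilde{\uu}_j=\phiv_{\cdot j}/d_j$ with $\phiv_{\cdot j}\in H^2\subset N^+$, every entry of $\tilde{\UU}_{\A}$ is a product of an $H^2$ function and an $N^+$ function, hence lies in $N^+$. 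On the other hand $\tilde{\UU}_{\A}$ is sub-unitary, so $\tilde{\UU}_{\A}^*\tilde{\UU}_{\A}=\I_r$ a.e. and each entry is bounded by $1$ in modulus, whence $\tilde{\UU}_{\A}\in L^\infty$. The decisive step is Smirnov's maximum principle $N^+\cap L^\infty=H^\infty$, which forces every entry, and thus $\tilde{\UU}_{\A}$ itself, into $H^\infty\subset H^2$; this supplies the one-sided expansion \eqref{eq:UU_Fourier_series} and makes $\tilde{\UU}$ inner.

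I expect the main obstacle to lie in keeping each argument at the correct class level. In (a) the eigenvalues are not themselves boundary values of analytic functions, so the passage through Cauchy--Binet to honestly analytic minors is indispensable, and one must check that the sums and products defining those minors stay in a Hardy (hence Nevanlinna) class rather than merely in $L^1$, since log-integrability fails for general $L^1$ densities. In (b) the subtle point is that $\tilde{\UU}_{\A}$ is a priori only of Smirnov type; upgrading this to $H^\infty$ rests entirely on the \emph{outer} (miniphase) choice of $\DD_r$, for without outerness the quotient $\phiv\DD_r^{-1}$ need not be Smirnov and boundedness on $T$ alone would not force analyticity inside $D$.
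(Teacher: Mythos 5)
The paper itself contains no proof of this theorem: it is quoted from the references \cite[Theorem 2.1]{szabados2022regular} and \cite[Theorem 2.1]{szabados2022corrigenda}, so there is no in-paper argument to compare yours against; your proposal must stand on its own, and it does. For part (a), every step checks out: regularity of rank $r$ gives $\f = \frac{1}{2\pi}\phiv\phiv^*$ with $\phiv \in H^2_{d\times r}$; the nonzero eigenvalues of $\phiv\phiv^*$ are those of the $r\times r$ Gram matrix $\phiv^*\phiv$, so $\det\Lam_r = (2\pi)^{-r}\det(\phiv^*\phiv)$; Cauchy--Binet expresses this as $\sum_S |\det\phiv_S|^2$ with each minor $\det\Phiv_S \in H^{2/r} \subset N$ (a product of $r$ functions in $H^2$ lies in $H^{2/r}$, and $H^{2/r}$ is closed under finite sums); the a.e.\ rank-$r$ condition forces some $\det\Phiv_{S_0} \not\equiv 0$, whose boundary modulus is then log-integrable by the classical Nevanlinna-class theorem, giving the lower bound, while $\log^+\det\Lam_r \le \sum_j \log^+\lambda_j \le \tr\f \in L^1$ gives the upper bound. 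The equivalence of $\log\det\Lam_r \in L^1$ with \eqref{eq:log_lambda} also rests on $\log^+\lambda_j \le \lambda_j$ being integrable, which you invoke. For part (b), the argument via $\tilde{\UU}_{\A} = \phiv\DD_r^{-1}$ is exactly right, and you correctly identify where the miniphase (outer) property of $\DD_r$ enters: outerness of each $d_j$ (legitimate because part (a) makes each $\log\lambda_j$ integrable) gives $1/d_j \in N^+$, products of Smirnov-class functions remain in $N^+$, sub-unitarity gives bounded boundary values, and Smirnov's maximum principle $N^+ \cap L^{\infty} = H^{\infty}$ upgrades each entry to $H^{\infty}$; since the columns of $\tilde{\UU}_{\A}$ are still unit eigenvectors of $\f$, this is a valid choice of $\tilde{\UU}$. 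This is the natural argument for these statements and, as far as the cited sources' methods can be reconstructed, essentially the same mechanism (analytic minors plus log-integrability for (a), division by the outer diagonal factor plus boundedness for (b)) used there.
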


The next theorem gives a sufficient condition for the regularity of a generic weakly stationary time series; compare with the statements of Theorem \ref{th:generic_reg_nec} above. Observe that assumptions (1) and (2) in the next theorem are necessary conditions of regularity as well. Only assumption (3) is not known to be necessary. We think that these assumptions are simpler to check in practice then the ones of Rozanov's theorem cited above. By formula \eqref{eq:trivial_spectral_factor}, checking assumption (3) means that for each  eigenvectors $\uu_k(\omega)$ of $\f(\omega)$ we are searching for a complex function multiplier $a_k(\omega)$ of unit absolute value that gives an $H^{\infty}$ function result.

\begin{thm}\label{th:generic_reg_suf} See \cite[Theorem 2.1]{szabados2022regular} and \cite[Theorem 2.2]{szabados2022corrigenda}.

Let $\{\X_t\}$ be a $d$-dimensional time series. It is regular of rank $r \le d$ if the following three conditions hold.
\begin{itemize}

\item[(1)]
It has an absolutely continuous spectral measure matrix $d\F$ with density matrix $\f(\omega)$ which has rank $r$ for a.e.\ $\omega \in[-\pi, \pi]$.

\item[(2)]
For $\Lam_r(\omega)$ defined by \eqref{eq:Lam_r} one has $\log \det \Lam_r \in L^1 = L^1([-\pi, \pi], \mathcal{B}, d\omega)$, equivalently, \eqref{eq:log_lambda} holds.

\item[(3)]
The sub-unitary matrix function $\tilde{\UU}(\omega)$ appearing in the spectral decomposition of $\f(\omega)$ in \eqref{eq:spectr_decomp_f_short1} can be chosen so that it belongs to the Hardy space $H^{\infty} \subset H^2$, thus \eqref{eq:UU_Fourier_series} holds.

\end{itemize}
\end{thm}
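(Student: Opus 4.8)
The plan is to reduce the statement to the factorization criterion recalled in the introduction: by \eqref{eq:f_factor}--\eqref{eq:Phiv} (Rozanov (1967), Wiener and Masani (1957)), $\{\X_t\}$ is regular of rank $r$ as soon as its spectral density $\f$ has a.e.\ rank $r$ and admits a factorization $\f=\frac{1}{2\pi}\phiv\phiv^*$ with $\phiv$ the boundary value of an analytic matrix function $\Phiv\in H^2_{d\times r}$. Assumption (1) already supplies the absolutely continuous density of constant rank $r$, so the whole task is to \emph{construct} such an $H^2$ factor out of the parsimonious spectral decomposition \eqref{eq:spectr_decomp_f_short1}, $\f(\omega)=\tilde{\UU}(\omega)\Lam_r(\omega)\tilde{\UU}^*(\omega)$. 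I would build it in two stages: first factor the diagonal matrix $\Lam_r$, then premultiply by the eigenvector matrix furnished by assumption (3).

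For the first stage I would factor $\Lam_r$ eigenvalue by eigenvalue. Since $\lambda_j(\omega)\le \tr(\f(\omega))\in L^1$, we get $\log^+\lambda_j\le\lambda_j\in L^1$; and because $\lambda_1\ge\cdots\ge\lambda_r>0$, the integrability \eqref{eq:log_lambda} of assumption (2) gives $\log^-\lambda_j\le\log^-\lambda_r\in L^1$ for every $j$. Hence $\log\lambda_j\in L^1$ for each $j$, and the classical scalar (Kolmogorov--Szeg\H{o}) construction produces outer functions $d_j\in H^2$ with $\tfrac{1}{2\pi}|d_j(e^{i\omega})|^2=\lambda_j(\omega)$ a.e. Collecting them into the diagonal factor $\DD_r=\diag[d_1,\dots,d_r]$ of \eqref{eq:Lamr_factor1} gives $\tfrac{1}{2\pi}\DD_r\DD_r^*=\Lam_r$.

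For the second stage I would set $\phiv(\omega)=\tilde{\UU}(\omega)\DD_r(\omega)$ as in \eqref{eq:trivial_spectral_factor} (with the eigenvectors already chosen in the good way, i.e.\ $\A=\I$). A direct computation gives $\tfrac{1}{2\pi}\phiv\phiv^*=\tilde{\UU}\,\tfrac{1}{2\pi}\DD_r\DD_r^*\,\tilde{\UU}^*=\tilde{\UU}\Lam_r\tilde{\UU}^*=\f$, and since $\tilde{\UU}$ has orthonormal columns while $\DD_r$ is diagonal with a.e.\ nonzero entries, $\phiv$ has rank $r$ a.e. The one remaining, and decisive, point is that $\phiv$ be the boundary value of a function in $H^2_{d\times r}$, i.e.\ that each entry have a Fourier expansion in non-negative powers of $e^{-i\omega}$. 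This is exactly where assumption (3) is used: by hypothesis $\tilde{\UU}\in H^\infty$, each entry of $\DD_r$ is in $H^2$, both have one-sided Fourier series, and the product of a bounded analytic function with an $H^2$ function is again $H^2$ (with $\|gh\|_2\le\|g\|_\infty\|h\|_2$). Hence $\phiv\in H^2_{d\times r}$, and the factorization criterion yields that $\{\X_t\}$ is regular of rank $r$.

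I expect the analyticity of $\phiv$ to be the only nontrivial step, and it is entirely carried by assumption (3). The eigenvalue factorization is the routine scalar Szeg\H{o} argument, and the identity $\tfrac{1}{2\pi}\phiv\phiv^*=\f$ together with the rank count is a one-line verification; the genuine difficulty is that the eigenvector matrix $\tilde{\UU}(\omega)$ is \emph{a priori} only measurable, so without the $H^\infty$-realizability granted by (3) there is no reason for $\phiv$ to possess a one-sided Fourier series. This is consistent with the remark preceding the theorem that (1) and (2) are necessary for regularity while (3) is the extra, not-known-to-be-necessary, hypothesis doing the essential work.
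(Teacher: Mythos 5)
Your proof is correct and takes essentially the same route as the paper: the paper itself defers this theorem's proof to the cited references, but its Section~2 discussion around \eqref{eq:Lamr_factor1} and \eqref{eq:trivial_spectral_factor}, and the proof of Theorem~\ref{th:smooth}, rest on exactly your construction --- factor $\Lam_r$ entrywise by the scalar Szeg\H{o} (outer-function) theorem into a diagonal $H^2$ factor $\DD_r$, then use assumption (3) to get $\phiv=\tilde{\UU}\DD_r\in H^2$ from $H^\infty\cdot H^2\subseteq H^2$ and invoke the Rozanov--Wiener--Masani factorization criterion. Your reduction of \eqref{eq:log_lambda} to $\log\lambda_j\in L^1$ for every $j$ (via $\log^+\lambda_j\le\lambda_j\le\tr\f\in L^1$ and $\lambda_j\ge\lambda_r$) is also the right justification for the eigenvalue-by-eigenvalue factorization, so there are no gaps.
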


Next we discuss another sufficient condition of regularity of a general stationary time series $\{\X_t\}$. An advantage of this sufficient condition is that it gives the Wold representation of $\{\X_t\}$ as well. Let us begin with the sufficient and necessary condition of regularity given by the factorization \eqref{eq:f_factor}, \eqref{eq:bj_Fourier} of the spectral density $\f$, where the $d \times r$ spectral factor $\phiv$ is in $H^2$.

Using Singular Value Decomposition (SVD), we can write that
\[
\phiv(\omega) = \VV(\omega) \SSS(\omega) \UU^*(\omega),
\]
where $\VV(\omega)$ is a $d \times r$ sub-unitary matrix, $\UU(\omega)$ is an $r \times r$ unitary matrix, $\SSS(\omega) = \diag[s_1, s_2, \dots , s_r]$ is an $r \times r$ diagonal matrix of positive singular values $s_1 \ge s_2 \ge \cdots \ge s_r$, for a.e.\ $\omega \in [-\pi, \pi]$. Clearly, $s_j = \sqrt{\lambda_j}$, for $j=1, \dots, r$.

The (generalized) inverse of $\phiv(\omega)$ is not unique when $d > r$. Let $\psiv(\omega)$ be the Moore--Penrose inverse of $\phiv(\omega)$:
\begin{equation}\label{eq:Moore_Penrose}
\psiv(\omega) := \UU(\omega) \SSS^{-1}(\omega) \VV^*(\omega), \quad \psiv(\omega) \phiv(\omega) = \I_r, \quad \text{a.e.} \quad \omega \in [-\pi, \pi] .
\end{equation}

We also need the spectral (Cram\'er) representation of the stationary time series
\[
\X_t = \int_{-\pi}^{\pi} e^{i t \omega} d \Z_{\omega}, \quad t \in \ZZ,
\]
where $\{\Z_{\omega}\}$ is a stochastic process with orthogonal increments, obtained by the isometry between the Hilbert spaces $L^2([-\pi, \pi], \mathcal{B}, \tr(d\F))$ and $H(\X) \subset L^2(\Omega, \mathcal{F}, \PPP)$.

\begin{thm}\label{th:H2H2}
Assume that the spectral measure of a $d$-dimensional weakly stationary time series $\{\X_t\}$ is absolutely continuous with density $\f$ which has constant rank $r$, $1 \le r \le d$. Moreover, assume that there is a finite constant $M$ such that $\|\f(\omega)\| \le M$ for all $\omega \in [-\pi, \pi]$, and $\f$ has a factorization $f = \frac{1}{2 \pi} \phiv \phiv^*$, where $\phiv \in H^2$ and its Moore--Penrose inverse $\psiv \in H^2$ as well.

Then the time series is regular and its fundamental white noise process can be obtained as
\begin{align}\label{eq:fund_white_noise1}
  \xiv_t &= \int_{-\pi}^{\pi} e^{i t \omega} \psiv(\omega) d\Z_{\omega}  \\
    &= \sum_{k=0}^{\infty} \gammav(k) \X_{t-k} \qquad (t \in \ZZ), \label{eq:fund_white_noise2}
\end{align}
where
\begin{equation}\label{eq:psiv_Fourier}
\psiv(\omega) = \sum_{k=0}^{\infty} \gammav(k) e^{-i k \omega}, \quad \gammav(k) = \frac{1}{2\pi} \int_{-\pi}^{\pi} e^{i k \omega} \psiv(\omega) d\omega
\end{equation}
is the Fourier series of $\psiv$, convergent in $L^2$ sense.

The sequence of coefficients $\{\bv(j)\}$ of the Wold representation is given by the $L^2$ convergent Fourier series
\begin{equation}\label{eq:phiv_Fourier}
\phiv(\omega) = \sum_{j=0}^{\infty} \bv(j) e^{-i j \omega}, \quad \bv(j) = \frac{1}{2\pi} \int_{-\pi}^{\pi} e^{i j \omega} \phiv(\omega) d\omega.
\end{equation}
\end{thm}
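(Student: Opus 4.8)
The plan is to construct the candidate fundamental process directly from the stochastic integral in \eqref{eq:fund_white_noise1} and verify in turn that it is orthonormal white noise, that it lies in the past $H^-_t$, and that $\{\X_t\}$ is recovered from it by a causal moving average; by the Wold characterization recalled in Section \ref{se:intro}, this simultaneously proves regularity and identifies the Wold coefficients. Throughout I would exploit the isometry between $L^2([-\pi,\pi],\mathcal{B},\tr(d\F))$ and $H(\X)$, under which $\int_{-\pi}^{\pi}\g(\omega)\,d\Z_\omega$ has second moment $\int_{-\pi}^{\pi}\g(\omega)\,d\F(\omega)\,\g^*(\omega) = \int_{-\pi}^{\pi}\g(\omega)\f(\omega)\g^*(\omega)\,d\omega$ for a matrix integrand $\g$. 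The boundedness hypothesis $\|\f(\omega)\| \le M$ enters as a standing tool: since $\tr(d\F) = \tr\f\,d\omega \le dM\,d\omega$, the measure $\tr(d\F)$ is absolutely continuous with bounded density, so $L^2([-\pi,\pi],d\omega)$-convergence of a matrix function entails convergence of the associated stochastic integrals, which legitimizes every interchange of an infinite Fourier series with the stochastic integral below.

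The orthonormality is the easy first step. Setting $\xiv_t = \int_{-\pi}^{\pi} e^{it\omega}\psiv(\omega)\,d\Z_\omega$ and using $\f = \frac{1}{2\pi}\phiv\phiv^*$ together with the Moore--Penrose identity $\psiv\phiv = \I_r$ from \eqref{eq:Moore_Penrose}, I would compute $\psiv\f\psiv^* = \frac{1}{2\pi}(\psiv\phiv)(\psiv\phiv)^* = \frac{1}{2\pi}\I_r$, whence $\E(\xiv_s\xiv_t^*) = \int_{-\pi}^{\pi} e^{i(s-t)\omega}\psiv\f\psiv^*\,d\omega = \delta_{st}\I_r$. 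Next, since $\psiv\in H^2$ its Fourier series $\psiv = \sum_{k=0}^{\infty}\gammav(k)e^{-ik\omega}$ is one-sided and converges in $L^2(d\omega)$; passing it through the stochastic integral yields $\xiv_t = \sum_{k=0}^{\infty}\gammav(k)\X_{t-k}$, which is \eqref{eq:fund_white_noise2} and exhibits $\xiv_t\in H^-_t$.

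For the moving-average recovery I would let $\bv(j)$ be the Fourier coefficients of $\phiv\in H^2$ and form $\sum_{j=0}^{\infty}\bv(j)\xiv_{t-j} = \int_{-\pi}^{\pi} e^{it\omega}\phiv(\omega)\psiv(\omega)\,d\Z_\omega$. By the SVD $\phiv = \VV\SSS\UU^*$, $\psiv = \UU\SSS^{-1}\VV^*$, the product $\phiv\psiv$ equals $\VV\VV^*$, the orthogonal projection onto the range of $\f(\omega)$. Since $\f = \frac{1}{2\pi}\VV\SSS^2\VV^*$ gives $\f(\I_d - \VV\VV^*) = \OO$, the remainder $\X_t - \sum_{j=0}^{\infty}\bv(j)\xiv_{t-j} = \int_{-\pi}^{\pi} e^{it\omega}(\I_d - \VV\VV^*)\,d\Z_\omega$ has second moment $\int_{-\pi}^{\pi}(\I_d - \VV\VV^*)\f(\I_d - \VV\VV^*)^*\,d\omega = \OO$, so $\X_t = \sum_{j=0}^{\infty}\bv(j)\xiv_{t-j}$ is a genuine causal moving average. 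Together with the previous step this gives $H^-_t(\xiv) = H^-_t(\X)$, identifies $\{\xiv_t\}$ as a fundamental process and $\{\bv(j)\}$ as the Wold coefficients, and shows that $\{\X_t\}$ is regular.

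The main obstacle is analytic rather than algebraic: making the two series--integral interchanges rigorous. For the expansion of $\xiv_t$ in the $\X$'s one genuinely needs $\|\f\| \le M$, since it controls $\int_{-\pi}^{\pi}\tr[(\psiv - \psiv_N)\f(\psiv - \psiv_N)^*]\,d\omega \le M\int_{-\pi}^{\pi}\tr[(\psiv - \psiv_N)(\psiv - \psiv_N)^*]\,d\omega \to 0$, where $\psiv_N$ is the $N$-th partial Fourier sum and the last integral vanishes by Parseval; without a bound on $\f$ the tail need not converge in $H(\X)$. For the expansion in the $\xiv$'s the identity $\psiv\f\psiv^* = \frac{1}{2\pi}\I_r$ does the controlling, giving $\int_{-\pi}^{\pi}\tr[(\phiv - \phiv_N)\psiv\f\psiv^*(\phiv - \phiv_N)^*]\,d\omega = \frac{1}{2\pi}\int_{-\pi}^{\pi}\tr[(\phiv - \phiv_N)(\phiv - \phiv_N)^*]\,d\omega \to 0$. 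Verifying that $\phiv\psiv = \VV\VV^*$ coincides with the range projection of $\f$ is the remaining delicate point, as it is where the constant-rank hypothesis and the precise form of the Moore--Penrose inverse are used.
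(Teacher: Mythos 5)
Your proposal is correct and follows essentially the same route as the paper's own proof: the same isometry computations ($\psiv\f\psiv^* = \frac{1}{2\pi}\I_r$ for orthonormality, the $H^2$ property of $\psiv$ plus the bound $\|\f\| \le M$ to get $\xiv_t \in H^-_t(\X)$, and the fact that $\phiv\psiv$ acts a.e.\ as the identity on the range of $\f$ to recover $\X_t = \sum_{j\ge 0}\bv(j)\xiv_{t-j}$), followed by the conclusion that the pasts coincide so the representation is the Wold one. The only cosmetic differences are that you phrase the recovery step via the SVD identity $\phiv\psiv = \VV\VV^*$ where the paper uses the algebraic identity $(\phiv\psiv - \I_d)\f(\psiv^*\phiv^* - \I_d) = \bm{0}$ directly from $\psiv\phiv = \I_r$, and that you deduce regularity from the constructed causal moving average rather than citing the classical $H^2$-factorization criterion up front.
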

\begin{proof}
The regularity of $\{\X_t\}$ obviously follows from the assumptions by \eqref{eq:f_factor} and \eqref{eq:bj_Fourier}.

First let us verify that the stochastic integral \eqref{eq:fund_white_noise1} is correct, that is, the components of $\psiv$ belong to $L^2([-\pi, \pi], \mathcal{B}, \tr(d\F))$:
\[
\int_{-\pi}^{\pi} \psiv(\omega) \f(\omega) \psiv^*(\omega) d\omega = \frac{1}{2\pi} \int_{-\pi}^{\pi} \psiv(\omega) \phiv(\omega) \phiv^*(\omega) \psiv^*(\omega) d\omega = \I_r .
\]
This also justifies that
\[
\xiv_t = \int_{-\pi}^{\pi} e^{i t \omega} \psiv(\omega) d\Z_{\omega} = \int_{-\pi}^{\pi} e^{i t \omega} \sum_{k=0}^{\infty} \gammav(k) e^{-i k \omega} d\Z_{\omega}
= \sum_{k=0}^{\infty} \gammav(k) \X_{t-k} .
\]

Second, let us check that the sequence defined by \eqref{eq:fund_white_noise1} is orthonormal, using the isometry mentioned above:
\begin{align*}
\E(\xiv_n \xiv^*_m) &= \int_{-\pi}^{\pi} e^{i n \omega} \psiv(\omega) \f(\omega) e^{-i m \omega} \psiv^*(\omega) d \omega \\
&= \frac{1}{2 \pi} \int_{-\pi}^{\pi} e^{i (n - m) \omega} \psiv(\omega) \phiv(\omega) \phiv^*(\omega) \psiv^*(\omega) d \omega = \delta_{n, m} \I_r .
\end{align*}

Third, let us show that $\xiv_n$ is orthogonal to the past $H^-_{n-k}(\X)$ for any $k > 0$:
\begin{align*}
\E(\X_{n-k} \xiv^*_n) &= \int_{-\pi}^{\pi} e^{i (n-k) \omega} \f(\omega) e^{-i n \omega} \psiv^*(\omega)  d \omega \\
&= \frac{1}{2 \pi} \int_{-\pi}^{\pi} e^{-i k \omega} \phiv(\omega) \phiv^*(\omega) \psiv^*(\omega) d \omega = \frac{1}{2 \pi} \int_{-\pi}^{\pi} e^{-i k \omega} \phiv(\omega) d \omega = \bm{0}_{d \times r}
\end{align*}
for any $k > 0$, since $\phiv \in H^2$, so its Fourier coefficients with negative indices are zero.

Fourth, let us see that $\xiv_n \in H^-_n(\X)$ for $n \in \ZZ$. Because of stationarity, it is enough to show that $\xiv_0 \in H^-_0(\X)$. Since $H^-_0(\X)$ is the closure in $L^2(\Omega, \mathcal{F}, \PPP)$ of the components of all finite linear combinations of the form $\sum_{k=0}^N \gammav_k \X_{-k}$, by the isometry it is equivalent to the fact that $\psiv$ belongs to the closure in $L^2([-\pi, \pi], \mathcal{B}, \tr(d\F))$ of all finite linear combinations of the form $\sum_{k=0}^N \gammav_k e^{-i k \omega}$.

We assumed that $\psiv \in H^2$, which means that $\psiv$ has a one-sided Fourier series \eqref{eq:psiv_Fourier} which converges in $L^2([-\pi, \pi], \mathcal{B}, d\omega)$, where $d\omega$ denotes Lebesgue measure. Then using the assumed boundedness of $\|\f\|$, we obtain that \begin{align*}
&\int_{-\pi}^{\pi} \left\| \left(\sum_{k=0}^N \gammav_k e^{-i k \omega} - \psiv(\omega) \right) \f(\omega) \left(\sum_{k=0}^N \gammav^*_k e^{i k \omega} - \psiv^*(\omega) \right) \right\| d\omega \\
\le & \int_{-\pi}^{\pi} \left\| \sum_{k=0}^N \gammav_k e^{-i k \omega} - \psiv(\omega) \right\| \| \f(\omega)\| \left\|\sum_{k=0}^N \gammav^*_k e^{i k \omega} - \psiv^*(\omega)  \right\| d\omega \\
\le & M \int_{-\pi}^{\pi} \left\| \sum_{k=0}^N \gammav_k e^{-i k \omega} - \psiv(\omega) \right\|^2 d\omega ,
\end{align*}
which tends to $0$ as $N \to \infty$. This shows that $\xiv_0 \in H^-_0(\X)$.

Fifth, by \eqref{eq:Moore_Penrose}, we see that
\begin{equation}\label{eq:phiv_psiv}
(\phiv \psiv - \I_d) \f (\psiv^* \phiv^* - \I_d) = (\phiv \psiv - \I_d) \frac{1}{2 \pi} \phiv \phiv^* (\psiv^* \phiv^* - \I_d) = \bm{0}_{d \times d},
\end{equation}
a.e.\ in $[-\pi, \pi]$. Consequently, the difference
\[
\Delta_t := \int_{-\pi}^{\pi} e^{i t \omega} \phiv(\omega) \psiv(\omega) d\Z_{\omega} - \int_{-\pi}^{\pi} e^{i t \omega} d\Z_{\omega}
\]
is orthogonal to itself in $H(\X)$, so it is a zero vector. Then by \eqref{eq:fund_white_noise1} and \eqref{eq:phiv_Fourier},
\begin{align}
\X_t &= \int_{-\pi}^{\pi} e^{i t \omega} d\Z_{\omega} = \int_{-\pi}^{\pi} e^{i t \omega} \phiv(\omega) \psiv(\omega) d\Z_{\omega} \nonumber \\
&= \int_{-\pi}^{\pi} e^{i t \omega} \sum_{j=0}^{\infty} \bv(j) e^{-i j \omega}  \psiv(\omega) d\Z_{\omega} = \sum_{j=0}^{\infty} \bv(j) \xiv_{t-j} . \label{eq:Wold_repr}
\end{align}
Equation \eqref{eq:phiv_psiv} shows that each entry of $\phiv \psiv$ belongs to $L^2([-\pi, \pi], \mathcal{B}, \tr(d\F))$, so the isometry between this space and $H(\X)$ justifies \eqref{eq:Wold_repr}.

Finally, the previous steps show that the innovation spaces of the sequences $\{\X_t\}$ and $\{\xiv_t\}$ are the same for any time $n \in \ZZ$, so the pasts $H^-_n(\X)$ and $H^-_n(\xiv)$ agree as well for any $n \in \ZZ$. Thus \eqref{eq:Wold_repr} gives the Wold representation of $\{\X_t\}$.
\end{proof}

\section{Smooth eigenvalues of the spectral density}\label{se:smooth}

In the one-dimensional case there is a well-known sufficient condition of  regularity, which at the same time gives a formula for a $H^2$ spectral factor and also for the white noise sequence and the coefficients in the Wold decomposition \eqref{eq:causal_MA}. This is the assumption that the process has a continuously differentiable spectral density $f(\omega) > 0$ for any $\omega \in [-\pi, \pi]$, see e.g.\ \cite[p. 76]{lamperti1977stochastic} or \cite[Subsection 2.8.2]{bolla2021multidimensional}.

This sufficient condition can be partially generalized to the multidimensional case. When a regular $d$-dimensional time series $\{\X_t\}$ has an $H^2$ spectral factor of the form \eqref{eq:trivial_spectral_factor}, equivalently, has a sub-unitary matrix function $\tilde{\UU}(\omega)$ appearing in the spectral decomposition of $\f(\omega)$ in \eqref{eq:spectr_decomp_f_short1} that can be chosen so that it belongs to the Hardy space $H^{\infty} \subset H^2$, then the smoothness of the nonzero eigenvalues of the spectral density $\f$ gives a formula for an $H^2$ spectral factor.

\begin{thm}\label{th:smooth}
Let $\{\X_t\}$ be a $d$-dimensional time series. It is regular of rank $r \le d$ if the following three conditions hold.
\begin{itemize}

\item[(1)]
It has an absolutely continuous spectral measure matrix $d\F$ with density matrix $\f(\omega)$ which has rank $r$ for a.e.\ $\omega \in[-\pi, \pi]$.

\item[(2)]
Each nonzero eigenvalue $\lambda_j(\omega)$ $(j=1, \dots, r)$ of $\f(\omega)$ is a continuously differentiable positive function on $[-\pi, \pi]$.

\item[(3)]
The sub-unitary matrix function $\tilde{\UU}(\omega)$ appearing in the spectral decomposition of $\f(\omega)$ in \eqref{eq:spectr_decomp_f_short1} can be chosen so that it belongs to the Hardy space $H^{\infty} \subset H^2$, thus \eqref{eq:UU_Fourier_series} holds.

\end{itemize}

Moreover, $\{\X_t\}$ satisfies the conditions of Theorem \ref{th:H2H2} too, so formulas \eqref{eq:fund_white_noise1} -- \eqref{eq:phiv_Fourier} give the Wold representation of $\{\X_t\}$.
\end{thm}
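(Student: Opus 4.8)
The plan is to obtain the theorem from the two results already in hand: Theorem~\ref{th:generic_reg_suf} for the regularity statement, and Theorem~\ref{th:H2H2} for the explicit Wold representation. The guiding idea is that the smoothness hypothesis (2) is a convenient strengthening of hypothesis (2) of Theorem~\ref{th:generic_reg_suf}, and that it additionally yields, by running the classical one-dimensional factorization eigenvalue-by-eigenvalue, an explicit $H^2$ spectral factor together with everything needed to invoke Theorem~\ref{th:H2H2}.

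I would first dispose of regularity. Conditions (1) and (3) here are verbatim conditions (1) and (3) of Theorem~\ref{th:generic_reg_suf}, so it suffices to check its condition (2). Each $\lambda_j$ is continuous and strictly positive on the compact interval $[-\pi,\pi]$, hence $\log\lambda_j$ is continuous and bounded, so $\log\det\Lam_r=\sum_{j=1}^r\log\lambda_j\in L^1$ and in particular \eqref{eq:log_lambda} holds. Thus Theorem~\ref{th:generic_reg_suf} already gives that $\{\X_t\}$ is regular of rank $r$.

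The substance is the explicit factor. For each fixed $j$ I would treat $\lambda_j$ as a one-dimensional spectral density: since $\lambda_j\in C^1$ and $\lambda_j>0$, the real function $g_j:=\log(2\pi\lambda_j)$ is $C^1$ on the circle, so $g_j'\in L^2$ gives $\sum_{k}|k|^2|c^{(j)}_k|^2<\infty$ and hence $\sum_{k}|c^{(j)}_k|<\infty$ by Cauchy--Schwarz, where $g_j(\omega)=\sum_{k}c^{(j)}_k e^{-ik\omega}$ with $c^{(j)}_{-k}=\overline{c^{(j)}_k}$. Setting
\[
d_j(\omega):=\exp\!\Big(\tfrac12 c^{(j)}_0+\sum_{k=1}^\infty c^{(j)}_k e^{-ik\omega}\Big)
\]
produces the miniphase (outer) factor of $\lambda_j$: the exponent has an absolutely summable one-sided Fourier series, so $d_j$ and $d_j^{-1}$ both belong to $H^\infty$, and $\tfrac{1}{2\pi}|d_j(\omega)|^2=\exp g_j(\omega)/(2\pi)=\lambda_j(\omega)$. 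With $\DD_r:=\diag[d_1,\dots,d_r]$ this is the factorization \eqref{eq:Lamr_factor1}, and $\DD_r,\DD_r^{-1}\in H^\infty$. I would then take $\phiv:=\tilde\UU\DD_r$ as in \eqref{eq:trivial_spectral_factor} with $\A=\I_r$; by hypothesis (3) together with $\DD_r\in H^\infty$ one gets $\phiv\in H^\infty\subset H^2$, while $\tfrac{1}{2\pi}\phiv\phiv^*=\tilde\UU\big(\tfrac{1}{2\pi}\DD_r\DD_r^*\big)\tilde\UU^*=\tilde\UU\Lam_r\tilde\UU^*=\f$, so \eqref{eq:f_factor} holds. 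The boundedness required by Theorem~\ref{th:H2H2} is immediate, since $\|\f(\omega)\|=\lambda_1(\omega)$ is continuous on a compact interval and hence bounded by some $M$.

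What remains is the last hypothesis of Theorem~\ref{th:H2H2}, that the Moore--Penrose inverse $\psiv=\phiv^+$ lies in $H^2$, and this is the step I expect to be the main obstacle. Because $\tilde\UU^*\tilde\UU=\I_r$ and $\DD_r$ is invertible, one computes $\psiv=\DD_r^{-1}\tilde\UU^*$, where $\DD_r^{-1}\in H^\infty$ is the clean ingredient supplied by the one-dimensional construction. The delicate point is the factor $\tilde\UU^*$: by \eqref{eq:UU_Fourier_series} the Fourier series of $\tilde\UU$ runs over nonnegative powers of $e^{-i\omega}$, so a priori $\tilde\UU^*$ carries the nonpositive powers, and one must show that $\DD_r^{-1}\tilde\UU^*$ is nevertheless one-sided, i.e.\ that the constructed $\phiv=\tilde\UU\DD_r$ is outer and its inverse causal. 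This is exactly where the inner structure of $\tilde\UU$ granted by hypothesis (3) has to be exploited, and it is the crux of the argument. Once $\psiv\in H^2$ is secured, Theorem~\ref{th:H2H2} applies word for word: \eqref{eq:fund_white_noise1}--\eqref{eq:fund_white_noise2} give the fundamental white noise and \eqref{eq:phiv_Fourier} the Wold coefficients, completing the proof.
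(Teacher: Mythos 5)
Your construction coincides with the paper's own proof almost step for step: the paper likewise expands $\log\lambda_j$ in a uniformly convergent Fourier series, splits it as $Q_j+\bar{Q}_j$, sets $\gamma_j=\exp Q_j$ (your $d_j$ up to the factor $\sqrt{2\pi}$), takes the factor $\phiv=\sqrt{2\pi}\,\tilde{\UU}\,\Gammav$ as in \eqref{eq:smooth_spect_fact}, and checks boundedness of $\|\f\|$ from continuity of $\lambda_1$; your shortcut of deducing regularity from Theorem~\ref{th:generic_reg_suf} rather than directly from the constructed factor is a harmless variant. The one step you flagged and left open --- that the Moore--Penrose inverse $\psiv=(2\pi)^{-1/2}\Gammav^{-1}\tilde{\UU}^*$ lies in $H^2$ --- is indeed a genuine gap in your proposal. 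But you should know that the paper does not clear it either: it asserts that since each $\gamma_j^{-1}=\exp(-Q_j)$ is continuous with a one-sided Fourier series, ``it implies that $\psiv\in H^2$,'' a sentence which accounts only for $\Gammav^{-1}$ and silently ignores the anti-causal factor $\tilde{\UU}^*$ that you correctly identified as the problem.

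Moreover, this gap cannot be filled by ``exploiting the inner structure of $\tilde{\UU}$.'' If $\psiv=(2\pi)^{-1/2}\Gammav^{-1}\tilde{\UU}^*$ were in $H^2$, then $\tilde{\UU}^*=\sqrt{2\pi}\,\Gammav\psiv$ would be a product of functions whose Fourier series carry only nonnegative powers of $e^{-i\omega}$, hence would itself be one-sided in $e^{-i\omega}$; but by \eqref{eq:UU_Fourier_series} it is one-sided in $e^{+i\omega}$, so all coefficients except the constant term vanish and $\tilde{\UU}$ must be a constant matrix. Worse, the ``Moreover'' clause is not merely unproven but false in general. Take $d=2$, $r=1$, $\lambda_1\equiv 1$, $\tilde{\UU}(\omega)=\frac{1}{\sqrt{2}}(1,e^{-i\omega})^{T}$ (i.e.\ $X^1$ is scalar white noise and $X^2_t=X^1_{t-1}$): conditions (1)--(3) hold and the process is regular of rank $1$, but every measurable factor with $\frac{1}{2\pi}\phiv\phiv^*=\f$ has the form $\phiv=\sqrt{\pi}\,a(\omega)(1,e^{-i\omega})^{T}$ with $|a|=1$, whose Moore--Penrose inverse is $\frac{1}{2\sqrt{\pi}}\,\bar{a}(\omega)(1,e^{+i\omega})$; membership of both in $H^2$ would force $a$ and $\bar{a}$ to be in $H^2$, hence $a$ constant, and then $\bar{a}e^{+i\omega}\notin H^2$. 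So no factorization of this $\f$ satisfies the hypotheses of Theorem~\ref{th:H2H2} at all. The honest conclusions of your (and the paper's) argument are regularity and the explicit $H^2$ spectral factor $\phiv=\tilde{\UU}\DD_r$; the claim that Theorem~\ref{th:H2H2} then applies requires either the additional assumption that $\tilde{\UU}$ can be chosen constant, or replacing the Moore--Penrose inverse by some other left inverse in $H^2$ --- it is not a step you failed to find, but one that does not exist in this generality.
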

\begin{proof}
Condition (2) implies that each $\log \lambda_j(\omega)$ $(j=1, \dots, r)$ is also a continuously differentiable function on $[-\pi, \pi]$ and so it can be expanded into a uniformly convergent Fourier series
\begin{equation}\label{eq:log_lambda_Fourier}
\log \lambda_j(\omega) = \sum_{n=-\infty}^{\infty} \beta_{j,n} e^{i n \omega}, \quad \beta_{j,-n} = \bar{\beta}_{j,n} .
\end{equation}
Write it as
\begin{align*}
\log \lambda_j(\omega) &= Q_j(\omega) + \bar{Q}_j(\omega) \\
&:= \left(\frac12 \beta_{j, 0} + \sum_{n=1}^{\infty} \bar{\beta}_{j,n} e^{-i n \omega}\right) + \left(\frac12 \beta_{j, 0} + \sum_{n=1}^{\infty} \beta_{j,n} e^{i n \omega}\right) .
\end{align*}

Define
\begin{equation}\label{eq:gammaj}
\gamma_j(\omega) := \exp(Q_j(\omega)) = \sum_{k=0}^{\infty} \frac{1}{k!} \left(\frac12 \beta_{j, 0} + \sum_{n=1}^{\infty} \bar{\beta}_{j,n} e^{-i n \omega}\right)^k.
\end{equation}
Then
\begin{equation}\label{eq:lambdaj}
\lambda_j(\omega) = \gamma_j(\omega) \bar{\gamma}_j(\omega) = \exp(Q_j(\omega)) \exp(\bar{Q}_j(\omega)) , \quad j=1, \dots, r .
\end{equation}
Observe that each $Q_j(\omega)$ and consequently each $\gamma_j(\omega)$ $(j=1, \dots, r)$ is a continuous function on $[-\pi, \pi]$, so is in $L^2(T)$. Moreover, each $Q_j(\omega)$ and consequently each $\gamma_j(\omega)$ has only positive powers of $e^{-i \omega}$ in its Fourier series. So each $\gamma_j$ belongs to the Hardy space $H^2$.

Substitute \eqref{eq:lambdaj} into \eqref{eq:spectr_decomp_f_short1}:
\begin{align*}
\f(\omega) = \tilde{\UU}(\omega) \; \diag[\gamma_1(\omega) \cdots \gamma_r(\omega)] \; \diag[\bar{\gamma}_1(\omega) \cdots \bar{\gamma}_r(\omega)] \; \tilde{\UU}^*(\omega) .
\end{align*}
Thus we can take a spectral factor
\begin{equation}\label{eq:smooth_spect_fact}
\phiv(\omega) := \sqrt{2 \pi} \tilde{\UU}(\omega) \Gammav(\omega), \quad \Gammav(\omega):= \diag[\gamma_1(\omega) \cdots \gamma_r(\omega)] .
\end{equation}
Since each $\gamma_j(\omega) \in H^2$ and by condition (3) each entry of $\tilde{U}(\omega)$ is in $H^{\infty}$, each entry of $\phiv(\omega)$ is in $H^2$. It means that $\phiv$ is an $H^2$ spectral factor as in \eqref{eq:f_factor} and \eqref{eq:bj_Fourier}, consequently $\{\X_t\}$ is regular.

Take the Moore--Penrose inverse of $\phiv$:
\[
\psiv(\omega) := \phiv^+(\omega) = (2 \pi)^{-\frac12} \Gammav^{-1}(\omega) \tilde{\UU}^*(\omega), \quad \Gammav^{-1}(\omega) := \diag[\gamma^{-1}_1(\omega), \dots, \gamma^{-1}_r(\omega)],
\]
where each $\gamma^{-1}_j(\omega) = \exp(-Q_j(\omega)$, so it is also continuous and its Fourier series has only positive powers of $e^{-i \omega}$ too. It implies that $\psiv \in H^2$.

Finally, since each $\lambda_j(\omega)$ is a continuous function on $[-\pi, \pi]$, so bounded, and the components of $\tilde{\UU}(\omega)$ are bounded functions because $\tilde{\UU}(\omega)$ is sub-unitary, it follows that $\|\f\|$ is bounded.
\end{proof}

The $d$-dimensional Kolmogorov--Szeg\H{o} formula \eqref{eq:Kolm_Szego_mult} gives only the determinant of the covariance matrix $\Siga$ of the innovations in the full rank regular time series. Similar is the case when the rank $r$ of the process is less than $d$:
\[
\det \Siga_r = (2\pi)^d \exp \int_{-\pi}^{\pi} \log \det \Lada_r(\omega) \frac{d \omega}{2\pi},
\]
where $\Lada_r$ is the diagonal matrix of the $r$ nonzero eigenvalues of $\f$ and $\Siga_r$ is the covariance matrix of the innovation of an $r$-dimensional subprocess of rank $r$ of the original time series, see \cite[Corollary 4.5]{bolla2021multidimensional} or \cite[Corollary 2.5]{szabados2022regular}. 

Fortunately, under the conditions of Theorem \ref{th:smooth}, one can obtain the covariance matrix $\Siga$ itself by a similar formula, as the next theorem shows.
\begin{thm}\label{th:smooth_K_Sz}
Assume that a weakly stationary $d$-dimensional time series satisfies the conditions of Theorem \ref{th:smooth}. Then the covariance matrix $\Siga$ of the innovations of the process can be obtained as
\[
\Siga = 2 \pi \psiv(0) \; \diag \left[\exp \int_{-\pi}^{\pi} \log \lambda_j(\omega) \frac{d \omega}{2 \pi} \right]_{j=1, \dots, r} \; \psiv^*(0) ,
\]  
where $\lambda_j$, $j=1, \dots, r$, are the nonzero eigenvalues of the spectral density matrix $\f$ of the process, $\tilde{\UU}(\omega)$ is the $d \times r$ matrix of corresponding orthonormal eigenvectors, and
\begin{equation}\label{eq:psiv0}
\psiv(0) = \frac{1}{2 \pi} \int_{-\pi}^{\pi} \tilde{\UU}(\omega) d\omega .
\end{equation}
\end{thm}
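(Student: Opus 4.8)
The plan is to reduce the computation of $\Siga$ to the single constant term $\bv(0)$ of the Wold representation, and then to read off $\bv(0)$ from the explicit spectral factor \eqref{eq:smooth_spect_fact}. First I would record that, by Theorem \ref{th:smooth} together with Theorem \ref{th:H2H2}, the process admits the Wold representation $\X_t = \sum_{j=0}^{\infty}\bv(j)\xiv_{t-j}$ with fundamental orthonormal white noise $\{\xiv_t\}$ ($\E(\xiv_t\xiv_t^*)=\I_r$) and coinciding pasts $H^-_n(\X)=H^-_n(\xiv)$. Since $\xiv_t\perp H^-_{t-1}(\xiv)$, the one-step innovation is
\[
\etav_t = \X_t - \proj_{H^-_{t-1}}\X_t = \bv(0)\xiv_t,
\]
whence $\Siga = \E(\etav_t\etav_t^*) = \bv(0)\bv(0)^*$. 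The task therefore reduces to evaluating the zeroth Fourier coefficient $\bv(0) = \frac{1}{2\pi}\int_{-\pi}^{\pi}\phiv(\omega)\,d\omega$ from \eqref{eq:phiv_Fourier}.

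Next I would use the factored form \eqref{eq:smooth_spect_fact}, $\phiv = \sqrt{2\pi}\,\tilde{\UU}\Gammav$, so that $\bv(0)$ is $\sqrt{2\pi}$ times the zeroth Fourier coefficient of the matrix product $\tilde{\UU}(\omega)\Gammav(\omega)$. The structural fact I would lean on is that both factors have one-sided (nonnegative-power) Fourier series: $\tilde{\UU}\in H^{\infty}$ by condition (3), and each $\gamma_j=\exp(Q_j)\in H^2$ has only nonnegative powers of $e^{-i\omega}$ by its construction in \eqref{eq:gammaj}. Hence the product is again one-sided, and its constant term factors as the product of the constant terms of the two factors, namely $\psiv(0)\,\G$, where $\psiv(0)$ is the zeroth coefficient of $\tilde{\UU}$ from \eqref{eq:UU_Fourier_series}, equal to $\frac{1}{2\pi}\int_{-\pi}^{\pi}\tilde{\UU}(\omega)\,d\omega$ as in \eqref{eq:psiv0}, and $\G:=\diag[g_1,\dots,g_r]$ collects the zeroth coefficients $g_j$ of the $\gamma_j$.

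Then I would compute $g_j$ explicitly. Writing $\gamma_j(\omega)=\exp(\tfrac12\beta_{j,0})\exp\bigl(\sum_{n\ge 1}\bar{\beta}_{j,n}e^{-in\omega}\bigr)$, the second exponential expands as $1$ plus a combination of terms $e^{-in\omega}$ with $n\ge 1$, so its constant term is $1$ and $g_j=\exp(\tfrac12\beta_{j,0})$. As $\beta_{j,0}=\frac{1}{2\pi}\int_{-\pi}^{\pi}\log\lambda_j(\omega)\,d\omega$ is real, $g_j$ is real and positive with $g_j^2=\exp\int_{-\pi}^{\pi}\log\lambda_j(\omega)\,\frac{d\omega}{2\pi}$. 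Collecting the pieces gives $\bv(0)=\sqrt{2\pi}\,\psiv(0)\,\G$, and therefore
\[
\Siga = \bv(0)\bv(0)^* = 2\pi\,\psiv(0)\,\G\G^*\,\psiv^*(0).
\]
Since $\G$ is diagonal with real positive entries, $\G\G^*=\diag[g_1^2,\dots,g_r^2]$ equals the claimed diagonal matrix, which is precisely the asserted formula.

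The one point to treat with care, rather than a genuine obstacle, is the justification that the constant term of the product $\tilde{\UU}\Gammav$ factors as the product of the individual constant terms. This is formally immediate for one-sided series but merits a convergence remark: condition (2) gives $\log\lambda_j\in C^1[-\pi,\pi]$, hence an absolutely convergent Fourier series, since $\sum_n|\beta_{j,n}|\le\bigl(\sum_{n\ne 0}n^{-2}\bigr)^{1/2}\bigl(\sum_n n^2|\beta_{j,n}|^2\bigr)^{1/2}<\infty$ by Cauchy--Schwarz, the last factor being finite because $(\log\lambda_j)'\in C^0\subset L^2$. Then each $\gamma_j$ is continuous with an absolutely convergent one-sided series, and multiplying it against the bounded one-sided series of $\tilde{\UU}\in H^{\infty}$ legitimizes reading off the constant term entrywise; everything else is bookkeeping.
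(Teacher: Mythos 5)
Your proposal is correct and follows essentially the same route as the paper: reduce to $\Siga = \bv(0)\bv(0)^*$ via the Wold representation, then extract the constant term of $\phiv = \sqrt{2\pi}\,\tilde{\UU}\Gammav$ as the product of the constant terms $\psiv(0)$ and $\diag[\exp(\beta_{j,0}/2)]$. The paper phrases this last step as evaluating the analytic extensions $\hat{\UU}(z)\hat{\Gammav}(z)$ at $z=0$, which is the same computation as your zeroth-Fourier-coefficient argument; your added remark on absolute convergence of the series for $\log\lambda_j$ only makes explicit a point the paper leaves implicit.
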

\begin{proof}
The error of the best 1-step linear prediction by \eqref{eq:pred_h_step}, and the same time, the innovation is
\[
\X_1 - \hat{\X}_1 = \bv(0) \xiv_1 ,
\]
using the Wold decomposition of $\{\X_t\}$. Thus the covariance of the innovation is
\[
\Siga = \E((\X_1 - \hat{\X}_1) (\X_1 - \hat{\X}_1)^*) = \bv(0) \bv^*(0) .
\]

With the analytic function $\Phiv(z)$ corresponding to the Wold decomposition by \eqref{eq:Phiv}, $\bv(0) = \Phiv(0)$. Taking the Fourier series \eqref{eq:UU_Fourier_series}, let
\[
\hat{\UU}(z) := \sum_{j=0}^{\infty} \psiv(j) z^j, \quad |z| \le 1.
\]
Also, denote by \eqref{eq:gammaj}
\[
\hat{\Gammav}(z) := \diag \left[\sum_{k=0}^{\infty} \frac{1}{k!} \left(\frac12 \beta_{j, 0} + \sum_{n=1}^{\infty} \bar{\beta}_{j,n} z^n \right)^k \right]_{j=1, \dots, n} , \quad |z| \le 1.
\]
Now using \eqref{eq:smooth_spect_fact}, it follows that 
\[
\Phiv(z) = \sqrt{2 \pi} \hat{\UU}(z) \hat{\Gammav}(z), \quad |z| \le 1 ,
\]
and
\[
\Phiv(0) = \sqrt{2 \pi} \hat{\UU}(0) \hat{\Gammav}(0) = \psiv(0) \; \diag \left[ \exp(\beta_{j,0}/2) \right]_{j=1, \dots, r} .
\]

Combining the previous results, 
\[
\Siga = 2 \pi \psiv(0) \; \diag \left[ \exp(\beta_{j,0}) \right]_{j=1, \dots, r} \psiv^*(0) ,
\]
where $\psiv(0)$ is given by \eqref{eq:psiv0} and by \eqref{eq:log_lambda_Fourier}, 
\[
\beta_{j, 0} = \frac{1}{2 \pi} \int_{-\pi}^{\pi} \log \lambda_j(\omega) d\omega, \quad j=1, \dots, r.
\]
This completes the proof of the theorem.
\end{proof}


\end{document}